\documentclass{baustms}

\citesort
\usepackage[utf8]{inputenc}

\DeclareMathOperator{\rank}{rank}
\newcommand{\Z}{\mathbb Z}

\newcommand{\Fp}{\mathbb F_p}
\newcommand{\leg}[2]{\left(\frac{#1}{#2}\right)}

\theoremstyle{cupthm}
\newtheorem{thm}{Theorem}[section]
\newtheorem{prop}[thm]{Proposition}

\newtheorem{lemma}[thm]{Lemma}
\theoremstyle{cupdefn}

\theoremstyle{cuprem}

\numberwithin{equation}{section}

\begin{document}

\runningtitle{A determinant congruence conjectured by Sun}
\title{A Determinant Congruence Conjectured by Sun}

\cauthor
\author[1]{Yutong Zhang}
\address[1]{School of Mathematics, Sichuan University, 610065 Chengdu, China
\email{yutongzhang@stu.scu.edu.cn}
ORCID: 0009-0000-1220-0702}

\author[2]{Yaoran Yang}
\address[2]{School of Mathematics, Sichuan University, 610065 Chengdu, China
\email{yangyaoran@stu.scu.edu.cn}
ORCID: 0009-0004-2832-9163}

\authorheadline{Y. Zhang and Y. Yang}

\begin{abstract}
We prove a strengthened form of a conjecture of Sun on a determinant attached to a binary quadratic form. Let $n>3$ and let $c,d\in\Z$. If $n$ is composite, then
\[
 \det\big[(i^2+cij+dj^2)^{n-2}\big]_{0\leq i,j\leq n-1}\equiv 0\pmod {n^2}
\]
with no condition on $c$ and $d$. If $n=p$ is prime, the same congruence holds whenever the Legendre symbol $\leg{d}{p}$ is $-1$. For composite $n$, a polynomial determinant is divisible by two Vandermonde factors; after specialisation, their product already yields the required square divisor. For prime $n=p$, we estimate the rank of the matrix modulo $p$. The required rank defect follows from a coefficient cancellation obtained from the involution $t\mapsto d/t$ on $\Fp^\times$ and the condition $\leg{d}{p}=-1$.
\end{abstract}

\classification{primary 11C20; secondary 11A07, 11A15, 11T06, 15A15}
\keywords{determinant congruence, Legendre symbol, Vandermonde determinant, finite field}

\maketitle

\section{Introduction}

For an odd positive integer $n$, write $\leg{\cdot}{n}$ for the Jacobi symbol; for prime $n$, the same notation denotes the Legendre symbol. Sun has introduced a number of determinants whose entries are defined by quadratic forms and Legendre or Jacobi symbols, and has posed several conjectures about their arithmetic behaviour; see, for example, \cite{Sun2019,Sun2024}. For any integer $n>3$, put
\begin{equation}\label{eq:Dn}
 D_n(c,d)=\det\big[(i^2+cij+dj^2)^{n-2}\big]_{0\leq i,j\leq n-1},
\end{equation}
where $c,d\in\Z$. One conjecture of Sun, \cite[Conjecture 4.1]{Sun2024}, was stated for odd $n$ and asserts that
\[
        D_n(c,d)\equiv0\pmod {n^2}
        \qquad\hbox{whenever}\qquad
        \leg{d}{n}=-1.
\]
We prove this conjecture in the following slightly stronger form: in the composite case, neither the parity of $n$ nor any Jacobi-symbol condition is needed.

Determinants of this type lie at the intersection of determinant evaluations, finite-field methods and character sums. Classical related work includes Carlitz's cyclotomic matrices \cite{Carlitz1959} and Chapman's Legendre-symbol matrices \cite{Chapman2004}. Motivated by Sun's conjectures, Krachun, Petrov, Sun and Vsemirnov \cite{KPSV2020} proved several determinant identities involving Jacobi symbols. Further results on determinants connected with quadratic residues and Legendre symbols were obtained, among others, by Wu \cite{Wu2021}, Wang and Wu \cite{WangWu2022}, Wu, She and Ni \cite{WSN2022}, Grinberg, Sun and Zhao \cite{GSZ2022}, Luo and Sun \cite{LuoSun2023}, Guo, Li, Tao and Wei \cite{GuoLiTaoWei2024}, and Wang and Sun \cite{WangSun2024}. She and Sun \cite{SheSun2025} studied several determinants and congruences associated with powers and Jacobi-symbol values of the binary form $i^2+cij+dj^2$, while Zhu and Ren \cite{ZhuRen2024} treated related prime-modulus subdeterminants. The result proved below concerns Sun's full $n\times n$ determinant \eqref{eq:Dn} with integral-power entries and establishes the modulus $n^2$ congruence in \cite[Conjecture 4.1]{Sun2024}.

The proof separates naturally into composite and prime moduli. If $n$ is composite, an alternating polynomial determinant gives a product of two Vandermonde factors, and the special value
\[
     \prod_{0\le r<s\le n-1}(s-r)
\]
is divisible by $n$. Thus its square divides $D_n(c,d)$. If $n=p$ is prime, the problem is reduced to a rank estimate over $\Fp$. After the polynomial
\[
      (X^2+cXY+dY^2)^{p-2}
\]
is reduced as a function on $\Fp^2$, its coefficient matrix has one zero row and two dependent rows. The zero row is the only delicate point; it follows from the involution $t\mapsto d/t$ and the assumption $\leg{d}{p}=-1$. A Smith normal form argument then lifts the rank defect modulo $p$ to divisibility by $p^2$ over the integers.

The main result is the following.

\begin{thm}\label{thm:main}
Let $n>3$ be an integer, and let $c,d\in\Z$. Suppose either that $n$ is composite, or that $n$ is prime and the Legendre symbol $\leg{d}{n}$ is $-1$. Then
\[
\det\big[(i^2+cij+dj^2)^{n-2}\big]_{0\leq i,j\leq n-1}
\equiv 0\pmod {n^2}.
\]
\end{thm}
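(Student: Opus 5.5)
We treat the composite and prime cases separately, as the introduction indicates.

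\emph{Composite $n$.} Put $m=n-2$ and expand
\[
(x_i^2+cx_iy_j+dy_j^2)^{m}=\sum_{k=0}^{2m} a_k(x_i,y_j),
\]
where $a_k$ collects the monomials of $x$-degree $k$. Each monomial $x_i^{a}y_j^{b}$ with $a+b=2m$ appears, so the matrix factors as $A B$. Here $A$ is $n\times(2m+1)$ with rows indexed by $i$ and entries $x_i^{k}$. The matrix $B$ is $(2m+1)\times n$ and has entries $e_k\,y_j^{2m-k}$, where $e_k$ are integer polynomials in $c,d$.

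Since $2m+1=2n-3\ge n$, Cauchy--Binet expresses the determinant as a sum over $n$-subsets $S\subseteq\{0,\dots,2m\}$ of products $\det A_S\det B_S$. Each $\det A_S$ is a generalized Vandermonde in $x_0,\dots,x_{n-1}$, hence divisible by $V(x)=\prod_{r<s}(x_s-x_r)$. Likewise each $\det B_S$ is divisible by $V(y)$. So the polynomial determinant equals $V(x)V(y)\,Q$ with $Q\in\Z[c,d,x,y]$. Specializing $x_i=i$ and $y_j=j$ gives $D_n(c,d)=\big(\prod_{r<s}(s-r)\big)^2 Q(\dots)$, and $\prod_{r<s}(s-r)=\prod_{k=1}^{n-1}k!$.

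It remains to check that $n\mid\prod_{k<n}k!$ for composite $n>3$. For $n=ab$ with $1<a<b$, we have $(n-1)!$ divisible by $ab$. For $n=q^2$ with $q>2$ prime, both $q$ and $2q$ are below $n$, so $q^2\mid(n-1)!$. (Here $n=4$ is excluded but is checked directly: $2!\cdot3!=12$ is divisible by $4$.) This gives $n^2\mid D_n$.

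\emph{Prime $n=p$.} Let $M=[(i^2+cij+dj^2)^{p-2}]$, reduced mod $p$. By Smith normal form, $p^2\mid\det M$ as soon as $\rank_{\Fp}M\le p-2$, since then two invariant factors are divisible by $p$. To bound the rank, reduce $f(X,Y)=(X^2+cXY+dY^2)^{p-2}$ as a function on $\Fp^2$ to a polynomial $g=\sum_{a,b\le p-1}g_{ab}X^aY^b$. Then $M=V_x G V_y^{T}$, where $V_x=[i^a]$ is invertible and $G=[g_{ab}]$, so $\rank M=\rank G$. Note that $f$ is homogeneous of degree $2p-4$. Reduction only replaces exponents $e\ge p$ by $e-(p-1)$, so the surviving monomials have total degree $2p-4$ or $p-3$, up to the effect of exponents $0$ versus $p-1$. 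The goal is to show that $G$ has one zero row and two proportional rows.

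The zero row should be row $a=0$ (or its partner $a=p-1$). It is the function $Y\mapsto$ the coefficient of $X^0$. Consider the row corresponding to summing over $x$: $\sum_{x\in\Fp}f(x,y)$ picks out coefficients with $a\equiv0\pmod{p-1}$, $a>0$. For $y\ne0$ we get $y^{2p-4}\sum_t(t^2+ct+d)^{p-2}$. Since $\leg dp=-1$, the quadratic $t^2+ct+d$ is nonzero at $t=0$. Writing $(t^2+ct+d)^{p-2}$ as an inverse, the hope is to use the substitution $t\mapsto d/t$ on $\Fp^\times$, which maps $t^2+ct+d$ to $(d/t^2)(t^2+ct+d)$, and to pair terms so that the relevant coefficient cancels.

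The two dependent rows should come from $a=0$ versus $a=p-1$: they differ only in the value at $x=0$, where $f(0,y)=d^{p-2}y^{2p-4}$ is a single monomial. This makes the corresponding rows multiples of a single unit-vector row. With one zero row and two proportional rows, we get $\rank G\le p-2$.

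I expect the main obstacle to be the zero-row step: identifying exactly which coefficient row vanishes and making the involution cancellation work. This is where $\leg dp=-1$ must enter, both to ensure $t^2+ct+d$ has no root issues at $t=0$ and to ensure that $d$ is not a square, so the involution has no fixed points.
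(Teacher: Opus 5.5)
Your composite case is correct and differs from the paper only in bookkeeping. Cauchy--Binet with alternants replaces the paper's divisibility lemma for alternating polynomials. The fact $n\mid(n-1)!$ (with $n=4$ checked by hand) replaces the paper's count $\nu_p(V_n)\ge n-p\ge\alpha$. The Smith normal form reduction and $M=V_xGV_y^{\mathrm t}$ also match the paper.

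\textbf{The gap: the wrong zero row.} Write $f(T)^{p-2}=(T^2+cT+d)^{p-2}=\sum_a\alpha_aT^a$.

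\begin{itemize}
\item Row $0$ of $G$ is read off from $f(0,y)=d^{p-2}y^{2p-4}$. So it is $d^{p-2}$ times the unit vector in column $p-3$, and it is never zero.
\item Your computation of $\sum_x f(x,y)$ shows that row $p-1$ is $\alpha_{p-1}$ times the same unit vector, where $\alpha_{p-1}=-\sum_{t\in\Fp}f(t)^{p-2}$. This need not vanish: for $p=5$, $c=0$, $d=2$ one has $(T^2+2)^3\equiv T^6+T^4+2T^2+3$, so $\alpha_4=1$.
\item The involution cannot force it to vanish. The map $t\mapsto d/t$ sends $f(t)^{p-2}$ to $d^{p-2}t^2f(t)^{p-2}$. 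This relates the $t^0$-moment to the $t^2$-moment, not to its own negative.
\item More fundamentally, rows $0$ and $p-1$ are your two proportional rows. If one of them were also the zero row, the pair would still cost only one unit of rank. You would get $\rank G\le p-1$, hence only $p\mid D_p$. The vanishing row must be a third row.
\end{itemize}

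\textbf{The missing idea: use a middle row.} By homogeneity, for $1\le r\le p-4$ row $r$ of $G$ has at most one non-zero entry. It sits in column $p-3-r$ and equals $\alpha_r+\alpha_{r+p-1}$, because $X^rY^{2p-4-r}$ and $X^{r+p-1}Y^{p-3-r}$ both reduce to $X^rY^{p-3-r}$.

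Take $r=m=(p-3)/2$, which lies in this range because $p>3$.
\begin{itemize}
\item Orthogonality on $\Fp^\times$ gives $\alpha_m+\alpha_{m+p-1}=-\sum_{t\in\Fp^\times}f(t)^{p-2}t^{-m}$.
\item Under $t\mapsto d/t$ the summand becomes $d^{(p-1)/2}t^{m+2}f(t)^{p-2}$. Since $m+2\equiv-m\pmod{p-1}$, this equals $\leg{d}{p}f(t)^{p-2}t^{-m}$.
\item So $\leg dp=-1$ forces the sum to equal its own negative, and row $m$ is zero.
\end{itemize}
Together with the proportional rows $0$ and $p-1$, this gives $\rank G\le p-2$.

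So the hypothesis enters through the sign $d^{(p-1)/2}=-1$ attached to the specific twist $t^{-m}$. It does not enter through the involution being fixed-point free; a fixed point would contribute a term equal to its own negative anyway.
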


\section{The composite case}

We start with a standard divisibility fact, included to make the argument over $\Z$ explicit.

\begin{lemma}\label{lem:vandermonde-divisibility}
Let $R$ be a commutative ring, and let
\[
G(x_0,\ldots,x_{N-1})\in R[x_0,\ldots,x_{N-1}]
\]
be alternating in the following sense: for every $r<s$, the polynomial obtained from $G$ by setting $x_s=x_r$ is zero. Then
\[
       \prod_{0\le r<s\le N-1}(x_s-x_r)
\]
divides $G$ in $R[x_0,\ldots,x_{N-1}]$.
\end{lemma}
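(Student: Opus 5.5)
We argue by induction on the number of factors, dividing out one linear factor at a time. Over an arbitrary commutative ring we cannot use unique factorisation, so the argument relies only on the fact that $x_s-x_r$ is monic in $x_s$. The basic tool is division by a monic polynomial. View $R[x_0,\ldots,x_{N-1}]$ as $A[x_s]$ with $A=R[x_i:i\ne s]$. For any $H\in A[x_s]$ we can write $H=(x_s-x_r)Q+H|_{x_s=x_r}$, with $Q\in A[x_s]$ unique. Hence $H|_{x_s=x_r}=0$ exactly when $(x_s-x_r)\mid H$, and the quotient $Q$ is unique because $x_s-x_r$ is monic and therefore a non-zero-divisor.

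Order the pairs $(r,s)$ with $r<s$ in some fixed way, say $P_1,\ldots,P_M$. We prove by induction on $k$ that
\[
G=\Big(\prod_{j\le k}(x_{s_j}-x_{r_j})\Big)G_k
\]
for some polynomial $G_k$. The base case is $G_0=G$. For the inductive step, set $(r,s)=P_{k+1}$ and substitute $x_s=x_r$ into the identity for $G$. The left side becomes $0$ by hypothesis. On the right, each earlier factor $x_{s_j}-x_{r_j}$ becomes a difference $x_a-x_b$ with $a\ne b$. It cannot become $0$, because $\{r_j,s_j\}\ne\{r,s\}$ and so the pair does not collapse. This gives
\[
0=\Big(\prod_{j\le k}(x_{a_j}-x_{b_j})\Big)\,G_k|_{x_s=x_r}
\]
in the polynomial ring in the variables other than $x_s$.

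The main obstacle is cancelling this product, since $R$ may have zero divisors. The way around it is that each $x_a-x_b$ with $a\ne b$ is monic in $x_a$ up to sign. A product of polynomials that are monic in some variable is not a zero divisor, and this holds over any commutative ring. To see it, take a leading-coefficient argument in one variable: if $fg=0$ with $f$ monic in $x_a$ and $g\neq0$, then the top $x_a$-coefficient of $fg$ equals the top coefficient of $g$, which is nonzero. Applying this factor by factor gives $G_k|_{x_s=x_r}=0$. The division observation from the first paragraph then gives $G_k=(x_s-x_r)G_{k+1}$, which completes the inductive step. At $k=M$ we obtain the claimed divisibility.
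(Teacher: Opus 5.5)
Your proof is correct and uses the same two ingredients as the paper. The first is the factor theorem for the monic linear polynomial $x_s-x_r$. The second is that polynomials monic in some variable are non-zero-divisors, which lets you cancel the factors already extracted after specialising. The difference is only organisational. You run one induction over all pairs $r<s$ in an arbitrary order, which is why you need factor-by-factor cancellation: the specialised product need not be monic in a single variable. The paper instead inducts on $N$, first extracting the factors $x_{N-1}-x_i$, whose specialised product $\prod_{i<k}(x_k-x_i)$ is monic in $x_k$. It then has to check that the quotient stays alternating in $x_0,\ldots,x_{N-2}$, a step your flat induction avoids.
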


\begin{proof}
We argue by induction on $N$. The assertion is clear for $N\leq1$. Put $z=x_{N-1}$ and
\[
       S=R[x_0,\ldots,x_{N-2}].
\]
We first prove that
\[
       P(z):=\prod_{i=0}^{N-2}(z-x_i)
\]
divides $G$ in $S[z]$. Suppose that, for some $0\leq k\leq N-2$, we have already written
\[
       G=\prod_{i=0}^{k-1}(z-x_i)G_k
       \qquad(G_k\in S[z]),
\]
with the empty product understood as $1$. Since $G$ is zero after the specialisation $z=x_k$, we obtain
\[
       0=\prod_{i=0}^{k-1}(x_k-x_i)\,G_k|_{z=x_k}.
\]
The product $\prod_{i=0}^{k-1}(x_k-x_i)$ is monic as a polynomial in $x_k$, and hence is a non-zero-divisor. Therefore $G_k|_{z=x_k}=0$, so the monic factor theorem gives $G_k=(z-x_k)G_{k+1}$ for some $G_{k+1}\in S[z]$. Repeating this for $k=0,1,\ldots,N-2$, we obtain
\[
       G=P(z)H
       \qquad(H\in S[z]).
\]

It remains to apply the induction hypothesis to $H$ in the variables $x_0,\ldots,x_{N-2}$. If $0\leq r<s\leq N-2$ and we set $x_s=x_r$, then the corresponding specialisation of $G$ is zero. Since the corresponding specialisation of $P(z)$ is monic in $z$, it is a non-zero-divisor; hence the corresponding specialisation of $H$ is zero. Thus $H$ is alternating in $x_0,\ldots,x_{N-2}$ in the same sense. By induction,
\[
       \prod_{0\le r<s\le N-2}(x_s-x_r)
\]
divides $H$. Multiplying this divisor by $P(z)$ gives the required Vandermonde product for $x_0,\ldots,x_{N-1}$.
\end{proof}

\begin{prop}\label{prop:composite}
Let $n>3$ be a composite integer. Then, for all $c,d\in\Z$,
\[
D_n(c,d)\equiv 0\pmod {n^2}.
\]
\end{prop}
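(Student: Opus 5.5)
We introduce independent indeterminates $x_0,\ldots,x_{n-1}$ and $y_0,\ldots,y_{n-1}$ and consider the polynomial determinant
\[
G=\det\big[(x_i^2+cx_iy_j+dy_j^2)^{n-2}\big]_{0\le i,j\le n-1}\in\Z[x_0,\ldots,x_{n-1},y_0,\ldots,y_{n-1}].
\]
Setting $x_s=x_r$ makes rows $r$ and $s$ equal, so that specialisation of $G$ vanishes. Hence $G$ is alternating in the $x$-variables in the sense of Lemma~\ref{lem:vandermonde-divisibility}, with $R=\Z[y_0,\ldots,y_{n-1}]$. The lemma gives $\prod_{r<s}(x_s-x_r)\mid G$.

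In the same way, setting $y_s=y_r$ makes columns $r$ and $s$ equal. We want both Vandermonde factors to divide $G$ simultaneously, and here we take care. Write $G=V(x)H$, where $V(x)=\prod_{r<s}(x_s-x_r)$ and $H\in\Z[x,y]$. Under the specialisation $y_s=y_r$ the factor $G$ becomes $0$, while $V(x)$ is unchanged. Since $V(x)$ is a nonzero element of the domain $\Z[x,y]$, the specialised $H$ must vanish. Thus $H$ is alternating in the $y$-variables over $R'=\Z[x_0,\ldots,x_{n-1}]$, and applying the lemma again gives
\[
G=V(x)V(y)K,\qquad K\in\Z[x,y].
\]
We then specialise $x_i=i$ and $y_j=j$, which yields $D_n(c,d)=\Delta^2\,K(0,\ldots,n-1;0,\ldots,n-1)$, where
\[
\Delta=\prod_{0\le r<s\le n-1}(s-r)=\prod_{k=1}^{n-1}k!,
\]
and where $K$ evaluated at these integer values is an integer.

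It remains to show that $n\mid\Delta$ for composite $n>3$; squaring this then gives $n^2\mid D_n(c,d)$. Since $(n-1)!$ is one of the factors of $\Delta$, it is enough to prove the classical fact that $n\mid(n-1)!$ for composite $n>4$. If $n=ab$ with $1<a<b<n$, then $a$ and $b$ appear as distinct factors of $(n-1)!$. If $n=p^2$ with $p>2$, then $p$ and $2p$ are distinct and both less than $n$. The remaining case $n=4$ is checked directly: $\Delta=1!\,2!\,3!=12$ is divisible by $4$. Alternatively, for any composite $n$, using the two factors $k!$ and $(n-1)!$ also works.

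The step needing the most care is the simultaneous divisibility by both Vandermonde factors. I have handled it with the non-zero-divisor argument above rather than appealing to coprimality of $V(x)$ and $V(y)$. Everything else is routine.
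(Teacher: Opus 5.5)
Your proof is correct and follows the paper's argument: the same polynomial determinant, the same cancellation of the non-zero factor $V(\mathbf x)$ to make the cofactor alternating in the $y$-variables, and the same specialisation $x_i=y_i=i$. The only difference is the final arithmetic step. You use that $(n-1)!$ is a factor of $\prod_{k=1}^{n-1}k!$, together with the classical fact $n\mid(n-1)!$ for composite $n>4$ and a direct check at $n=4$, whereas the paper bounds $\nu_p(V_n)\ge n-p\ge\alpha$ for each $p^\alpha\parallel n$; both are valid.
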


\begin{proof}
Put
\[
F(X,Y)=(X^2+cXY+dY^2)^{n-2}
\]
and consider the polynomial determinant
\[
\Phi(\mathbf x,\mathbf y)=
\det\big[F(x_i,y_j)\big]_{0\leq i,j\leq n-1}
\in \Z[x_0,\ldots,x_{n-1},y_0,\ldots,y_{n-1}].
\]
If two $x$-variables are made equal, then the corresponding two rows are equal; hence $\Phi$ is alternating in $x_0,\ldots,x_{n-1}$ in the sense of Lemma \ref{lem:vandermonde-divisibility}. Applying that lemma over $\Z[y_0,\ldots,y_{n-1}]$, we find that
\[
       V(\mathbf x):=\prod_{0\leq r<s\leq n-1}(x_s-x_r)
\]
divides $\Phi$. Thus
\[
       \Phi(\mathbf x,\mathbf y)=V(\mathbf x)\Phi_1(\mathbf x,\mathbf y)
\]
for some
\[
       \Phi_1\in\Z[x_0,\ldots,x_{n-1},y_0,\ldots,y_{n-1}].
\]
If two $y$-variables are made equal, then the corresponding two columns of $\Phi$ are equal. Since $V(\mathbf x)$ is independent of the $y$-variables and non-zero in the integral domain under consideration, cancellation shows that $\Phi_1$ is alternating in $y_0,\ldots,y_{n-1}$ in the same sense. Applying Lemma \ref{lem:vandermonde-divisibility} again, now over $\Z[x_0,\ldots,x_{n-1}]$, gives
\[
\Phi(\mathbf x,\mathbf y)=V(\mathbf x)V(\mathbf y)H(\mathbf x,\mathbf y)
\]
for some $H\in\Z[x_0,\ldots,x_{n-1},y_0,\ldots,y_{n-1}]$, where
\[
       V(\mathbf y)=\prod_{0\leq r<s\leq n-1}(y_s-y_r).
\]
Specialising $x_i=y_i=i$ yields
\begin{equation}\label{eq:Vn-factor}
D_n(c,d)=V_n^2\,H(0,1,\ldots,n-1,0,1,\ldots,n-1),
\end{equation}
where
\[
V_n=\prod_{0\leq r<s\leq n-1}(s-r)=\prod_{k=1}^{n-1}k^{\,n-k}.
\]
It remains to prove that $n\mid V_n$.

Let $p^\alpha\parallel n$. The difference $s-r=p$ occurs for exactly $n-p$ pairs $(r,s)$ with $0\le r<s\le n-1$, and hence
\[
       \nu_p(V_n)\ge n-p.
\]
We claim that $n-p\ge\alpha$. It suffices to show that the set
\[
       \{p,p+1,p+2,\ldots,n\}
\]
has at least $\alpha+1$ elements. Indeed, the $\alpha+1$ elements
\[
       p,\ p+1,\ p^2,\ p^3,\ldots,\ p^\alpha
\]
with the evident omission of $p^2,\ldots,p^\alpha$ when $\alpha=1$, are distinct elements of this set. Here $p^\alpha\leq n$; and, when $\alpha=1$, the compositeness of $n$ and the divisibility $p\mid n$ give $n\ge2p\ge p+1$. Thus $n-p+1\ge\alpha+1$, proving the claim. Therefore $\nu_p(V_n)\ge\alpha$ for every prime power $p^\alpha\parallel n$. Hence $n\mid V_n$, and \eqref{eq:Vn-factor} implies $n^2\mid D_n(c,d)$.
\end{proof}

\section{The prime case}

We shall use the following elementary consequence of the Smith normal form.

\begin{lemma}\label{lem:smith}
Let $A$ be an $N\times N$ integer matrix, let $p$ be a prime, and let $r\geq0$. If
\[
        \rank_{\Fp}(A\bmod p)\le N-r,
\]
then $p^r$ divides $\det A$.
\end{lemma}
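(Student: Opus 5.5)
We prove Lemma \ref{lem:smith} by reducing $A$ to Smith normal form over $\Z$. There are matrices $U,W\in GL_N(\Z)$ with
\[
UAW=\operatorname{diag}(e_1,\ldots,e_N),
\]
where the $e_i\in\Z$ (possibly zero) satisfy $e_1\mid e_2\mid\cdots\mid e_N$. Since $\det U,\det W=\pm1$, we get $\det A=\pm e_1e_2\cdots e_N$.

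Next we reduce modulo $p$. The matrices $U\bmod p$ and $W\bmod p$ are invertible over $\Fp$, because their determinants are $\pm1$, which is nonzero in $\Fp$. Multiplying by invertible matrices does not change rank, so
\[
\rank_{\Fp}(A\bmod p)=\rank_{\Fp}\operatorname{diag}(e_1\bmod p,\ldots,e_N\bmod p).
\]
The rank of a diagonal matrix is the number of its nonzero diagonal entries. Hence this rank equals the number of indices $i$ with $p\nmid e_i$.

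By hypothesis this number is at most $N-r$. So at least $r$ of the $e_i$ are divisible by $p$; this includes the case $e_i=0$, which $p$ divides. Each such factor contributes at least one factor of $p$ to the product $e_1\cdots e_N$, so $p^r$ divides $e_1\cdots e_N=\pm\det A$.

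No step here is a real obstacle. The only points to state explicitly are that reduction modulo $p$ commutes with matrix multiplication, and that unimodular integer matrices stay invertible modulo $p$. The divisibility chain of the Smith form is not even needed, only the diagonal form.

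The same argument can be phrased without Smith normal form. If the rank is at most $N-r$, then every $(N-r+1)$-minor of $A$ vanishes modulo $p$. Laplace expansion then shows that $p^r\mid\det A$ after suitable unimodular column operations. The diagonalisation route above is the cleaner one, so that is the one we use.
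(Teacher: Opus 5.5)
Your proof is correct and matches the paper's argument: Smith normal form over $\Z$, invertibility of the unimodular factors modulo $p$, and counting the diagonal entries divisible by $p$, zero entries included. Your closing remark about a minor-based alternative is only sketched, but nothing depends on it since the main argument is already complete.
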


\begin{proof}
Let
\[
        UAV=\operatorname{diag}(s_1,\ldots,s_N)
\]
be a Smith normal form of $A$, where $U,V$ are unimodular integer matrices. The reductions of $U$ and $V$ modulo $p$ are invertible over $\Fp$, and hence multiplication by them preserves rank over $\Fp$. Thus the rank of $A$ modulo $p$ is the number of diagonal entries $s_i$ whose residue modulo $p$ is non-zero. If this rank is at most $N-r$, then at least $r$ of the $s_i$ are divisible by $p$. Consequently
\[
        p^r\mid s_1\cdots s_N=\det(UAV)=\pm\det A,
\]
as required. If some $s_i=0$, the conclusion is of course still valid.
\end{proof}

\begin{lemma}\label{lem:critical}
Let $p\geq3$ be a prime, let $c,d\in\Z$, and suppose that $\leg{d}{p}=-1$. In $\Fp[T]$, write
\begin{equation}\label{eq:alpha-def}
(T^2+cT+d)^{p-2}=\sum_{a=0}^{2p-4}\alpha_a T^a.
\end{equation}
If $m=(p-3)/2$, then
\[
        \alpha_m+\alpha_{m+p-1}=0
        \qquad\hbox{in }\Fp.
\]
\end{lemma}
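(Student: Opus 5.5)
The plan is to express $\alpha_m+\alpha_{m+p-1}$ as a character-type sum over $\Fp^\times$, and then show that this sum vanishes by pairing $t$ with $d/t$.

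\textbf{Step 1: the coefficient sum as a sum over $\Fp^\times$.} Put $f(T)=(T^2+cT+d)^{p-2}$ and $q(T)=T^2+cT+d$. I will use the standard orthogonality fact
\[
\sum_{t\in\Fp^\times}t^{j}=\begin{cases}-1,& (p-1)\mid j,\\ 0,&\text{otherwise}.\end{cases}
\]
It gives
\[
\sum_{t\in\Fp^\times}t^{-m}f(t)=-\sum_{a\equiv m \ (\mathrm{mod}\ p-1)}\alpha_a .
\]
Since $0\le a\le 2p-4$ and $m=(p-3)/2<p-1$, the only admissible indices are $a=m$ and $a=m+p-1$. The next candidate, $m+2(p-1)$, already exceeds $2p-4$. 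Hence
\[
S:=\sum_{t\in\Fp^\times}t^{-m}f(t)=-(\alpha_m+\alpha_{m+p-1}),
\]
and it suffices to prove $S=0$.

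\textbf{Step 2: how $f$ behaves under $t\mapsto d/t$.} Since $\leg{d}{p}=-1$, we have $d\not\equiv0$, so the map $t\mapsto d/t$ is an involution of $\Fp^\times$. It has no fixed points, because $t^2=d$ has no solution. A direct computation gives
\[
q(d/t)=\frac{d}{t^2}\,q(t).
\]
Raising to the power $p-2$ and using $t^{p-1}=1$, this becomes
\[
f(d/t)=d^{p-2}t^{-2(p-2)}f(t)=d^{-1}t^{2}f(t).
\]
This identity holds whether or not $q(t)=0$. Consequently the term of $S$ at $d/t$ equals
\[
(d/t)^{-m}f(d/t)=d^{-m-1}t^{m+2}f(t)=\bigl(d^{-(m+1)}t^{2m+2}\bigr)\,t^{-m}f(t).
\]
Here $2m+2=p-1$, so $t^{2m+2}=1$. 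Also $m+1=(p-1)/2$, so Euler's criterion gives $d^{-(m+1)}=\leg{d}{p}^{-1}=-1$.

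\textbf{Step 3: cancellation.} Step 2 shows that the term of $S$ at $d/t$ is the negative of the term at $t$. Because the involution is fixed-point-free, $\Fp^\times$ splits into pairs $\{t,d/t\}$, and the two terms in each pair cancel. Hence $S=0$, and Step 1 gives $\alpha_m+\alpha_{m+p-1}=0$.

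The main obstacle is choosing the right exponent $-m$ in Step 1. It has to satisfy two requirements at once: it must isolate exactly the two coefficients $\alpha_m,\alpha_{m+p-1}$, and the scaling factor under the involution must collapse to $d^{-(p-1)/2}$. With $m=(p-3)/2$ both requirements hold, so the remaining steps are routine checks.
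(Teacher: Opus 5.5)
Your proof is correct and follows essentially the same route as the paper. You use the same sum $S=\sum_{t\in\Fp^\times}t^{-m}(t^2+ct+d)^{p-2}$: orthogonality shows it equals $-(\alpha_m+\alpha_{m+p-1})$, and the involution $t\mapsto d/t$ together with Euler's criterion makes the summand change sign. The only cosmetic difference is that you cancel terms in fixed-point-free pairs $\{t,d/t\}$, whereas the paper sums over the permutation to obtain $S=-S$ and then uses that $p$ is odd.
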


\begin{proof}
We still write $c$ and $d$ for their residues modulo $p$, and put
\[
        f(T)=T^2+cT+d\in\Fp[T].
\]
The assumption $\leg{d}{p}=-1$ implies $d\ne0$ in $\Fp$. Define
\[
        S=\sum_{t\in\Fp^\times} f(t)^{p-2}t^{-m}.
\]
Here and below, negative powers are interpreted in the multiplicative group $\Fp^\times$. By \eqref{eq:alpha-def},
\[
S=\sum_{a=0}^{2p-4}\alpha_a
      \sum_{t\in\Fp^\times}t^{a-m}.
\]
For any integer $k$, the usual orthogonality relation on $\Fp^\times$ gives
\[
\sum_{t\in\Fp^\times}t^k=
 \begin{cases}
 -1,& p-1\mid k,\\
 0,& p-1\nmid k
 \end{cases}
\qquad\hbox{in }\Fp.
\]
Among the integers $a$ with $0\le a\le 2p-4$, the only ones congruent to $m$ modulo $p-1$ are $a=m$ and $a=m+p-1$. Hence
\begin{equation}\label{eq:S-alpha}
        S=-(\alpha_m+\alpha_{m+p-1}).
\end{equation}

Now the map $t\mapsto d/t$ is a permutation of $\Fp^\times$. Moreover
\[
        f(d/t)=d\,t^{-2}f(t)
\]
for every $t\in\Fp^\times$. Therefore
\begin{align*}
 f(d/t)^{p-2}(d/t)^{-m}
 &= (d\,t^{-2})^{p-2}f(t)^{p-2}d^{-m}t^m  \\
 &= d^{-m-1}t^{m+2}f(t)^{p-2}.
\end{align*}
Since
\[
        m+2=(p+1)/2\equiv -m\pmod {p-1}
\]
and, by Euler's criterion,
\[
        d^{-m-1}
        =d^{-(p-1)/2}
        =\bigl(d^{(p-1)/2}\bigr)^{-1}
        =\leg{d}{p}^{-1}
        =-1
        \quad\hbox{in }\Fp,
\]
we obtain
\[
        f(d/t)^{p-2}(d/t)^{-m}
        =-f(t)^{p-2}t^{-m}
\]
for all $t\in\Fp^\times$. Summing over $t\in\Fp^\times$ and using that $t\mapsto d/t$ is a permutation gives $S=-S$. Since $p$ is odd, $S=0$. Equation \eqref{eq:S-alpha} now gives the desired cancellation.
\end{proof}

\begin{lemma}\label{lem:coefficient-matrix}
Let $p>3$ be a prime, let $c,d\in\Z$, and suppose that $\leg{d}{p}=-1$. Let $C=(C_{r,s})_{0\le r,s\le p-1}$ be the coefficient matrix of the unique polynomial
\[
        R(X,Y)=\sum_{0\le r,s\le p-1}C_{r,s}X^rY^s\in\Fp[X,Y]
\]
which has degree at most $p-1$ in each variable and represents the same function on $\Fp^2$ as
\[
        (X^2+cXY+dY^2)^{p-2}.
\]
Then
\[
        \rank_{\Fp}C\le p-2.
\]
\end{lemma}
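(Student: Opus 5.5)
The plan is to compute the reduced coefficient matrix $C$ explicitly from the homogeneous expansion, and then exhibit one row of $C$ that vanishes and two rows that are proportional. Dehomogenising with $T=X/Y$ gives $X^2+cXY+dY^2=Y^2(T^2+cT+d)$. With the $\alpha_a$ of \eqref{eq:alpha-def} this yields, in $\Fp[X,Y]$,
\[
(X^2+cXY+dY^2)^{p-2}=\sum_{a=0}^{2p-4}\alpha_a X^aY^{2p-4-a}.
\]
As functions on $\Fp^2$ we have $X^0=1$, and for $e\ge1$ we have $X^e=X^{e'}$, where $e'\in\{1,\ldots,p-1\}$ and $e'\equiv e\pmod{p-1}$; the same holds for $Y$. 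Reducing each monomial this way produces a polynomial of degree at most $p-1$ in each variable. By the uniqueness in the statement, this polynomial is $R$. Hence $C_{r,s}$ is the sum of those $\alpha_a$ for which the reduced exponent pair of $X^aY^{2p-4-a}$ equals $(r,s)$.

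Next I would read off three rows. Row $r=0$ receives only the term $a=0$, namely $Y^{2p-4}$. Since $2p-4\ge p$, this reduces to $Y^{p-3}$, so row $0$ is supported in column $p-3$ only. Row $r=p-1$ receives only $a=p-1$, since $a=2p-2$ is out of range. That term is $X^{p-1}Y^{p-3}$, so row $p-1$ is also supported in column $p-3$ only. Consequently rows $0$ and $p-1$ are linearly dependent.

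For $1\le r\le p-2$, row $r$ receives exactly the terms $a=r$ and $a=r+p-1$. Now take $r=m=(p-3)/2$; because $p>3$ we have $1\le m\le p-2$, and $m$ differs from $0$ and $p-1$. For $a=m$ the $Y$-exponent is $(3p-5)/2\ge p$, which reduces to $(3p-5)/2-(p-1)=m$. For $a=m+p-1$ the $Y$-exponent is $p-3-m=m$. Both terms therefore land in the same entry $(m,m)$, so row $m$ has the single possibly nonzero entry $\alpha_m+\alpha_{m+p-1}$. By Lemma \ref{lem:critical} this entry is zero, so row $m$ of $C$ vanishes.

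Finally, row $m$ is zero and rows $0$ and $p-1$ span a space of dimension at most one. So the row space of $C$ is spanned by at most $(p-3)+1=p-2$ vectors, giving $\rank_{\Fp}C\le p-2$. The main obstacle is the vanishing of row $m$, which is entirely absorbed by Lemma \ref{lem:critical}. What remains is careful bookkeeping of the exponent reduction, in particular confirming that both contributions to row $m$ fall in the same column $m$, and that no other index $a$ contributes to rows $0$, $m$ or $p-1$.
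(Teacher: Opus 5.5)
Your proposal is correct and is essentially the paper's own proof: you reduce the homogeneous expansion exponent by exponent, show that row $m=(p-3)/2$ collapses to the single entry $C_{m,m}=\alpha_m+\alpha_{m+p-1}$ (note $p-3-m=m$), which vanishes by Lemma \ref{lem:critical}, and show that rows $0$ and $p-1$ are both supported only in column $p-3$. The differences are cosmetic: the paper also proves the uniqueness of $R$ (you take it from the statement) and writes out all of $R$, and your claim that row $r$ receives both $a=r$ and $a=r+p-1$ should exclude the out-of-range index $a=2p-3$ when $r=p-2$, which does not affect row $m$.
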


\begin{proof}
First note the uniqueness in the statement. If a polynomial of degree at most $p-1$ in each variable vanishes on $\Fp^2$, then, after fixing $Y=y\in\Fp$, it is a polynomial in $X$ of degree at most $p-1$ with $p$ roots; hence it is zero. Its coefficients, as polynomials in $Y$, again have degree at most $p-1$ and vanish at all $y\in\Fp$, and therefore are zero. Existence follows by applying one-variable Lagrange interpolation successively in the two variables.

Let the coefficients $\alpha_a$ be defined by \eqref{eq:alpha-def}. We have the homogeneous expansion
\begin{equation}\label{eq:homogeneous-expansion}
        (X^2+cXY+dY^2)^{p-2}
        =\sum_{a=0}^{2p-4}\alpha_a X^aY^{2p-4-a}.
\end{equation}
For $e\ge0$ put $\bar e=0$ if $e=0$, and otherwise let $\bar e$ be the unique integer in $\{1,\ldots,p-1\}$ congruent to $e$ modulo $p-1$. Then $z^e=z^{\bar e}$ for all $z\in\Fp$. Since every monomial in \eqref{eq:homogeneous-expansion} has total degree $2p-4<2p-2$, the two exponents in the same monomial cannot both be at least $p$. Thus at most one exponent in each monomial needs reduction to transform the right hand side of \eqref{eq:homogeneous-expansion} into $R(X,Y)$. This is the point at which we use $p>3$; for example, the reduction of $Y^{2p-4}$ to $Y^{p-3}$ requires $p-3\in\{1,\ldots,p-1\}$.

We now record the reduction explicitly. The term $a=0$ gives $\alpha_0Y^{p-3}$. For $1\le r\le p-4$, the two indices $a=r$ and $a=r+p-1$ both reduce to the same monomial
\[
        X^rY^{p-3-r}.
\]
The three boundary indices $a=p-3,p-2,p-1$ give respectively
\[
        \alpha_{p-3}X^{p-3}Y^{p-1},\qquad
        \alpha_{p-2}X^{p-2}Y^{p-2},\qquad
        \alpha_{p-1}X^{p-1}Y^{p-3}.
\]
Finally, since $\alpha_{2p-4}=1$, the index $a=2p-4$ gives $X^{p-3}$. Hence the same function on $\Fp^2$ is represented by
\begin{align}\label{eq:R-poly}
R(X,Y)={}&\alpha_0Y^{p-3}
 +\sum_{r=1}^{p-4}(\alpha_r+\alpha_{r+p-1})X^rY^{p-3-r}
 +X^{p-3}  \nonumber\\
&+\alpha_{p-3}X^{p-3}Y^{p-1}
 +\alpha_{p-2}X^{p-2}Y^{p-2}
 +\alpha_{p-1}X^{p-1}Y^{p-3}.
\end{align}

Let $m=(p-3)/2$. Since $p>3$, we have $1\le m\le p-4$. In \eqref{eq:R-poly}, the only possible non-zero entry in row $m$ of $C$ is
\[
        C_{m,p-3-m}=\alpha_m+\alpha_{m+p-1},
\]
and this is zero by Lemma \ref{lem:critical}. Hence row $m$ is zero. In addition, row $0$ and row $p-1$ can have non-zero entries only in column $p-3$, so these two rows are linearly dependent. These rows are distinct from row $m$. Consequently the row rank of $C$ is at most $p-2$.
\end{proof}

\begin{prop}\label{prop:prime}
Let $p>3$ be a prime, let $c,d\in\Z$, and suppose that $\leg{d}{p}=-1$. Then
\[
        D_p(c,d)\equiv0\pmod {p^2}.
\]
\end{prop}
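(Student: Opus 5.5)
The plan is to combine Lemma \ref{lem:coefficient-matrix} with Lemma \ref{lem:smith}. Let $A=\big[(i^2+cij+dj^2)^{p-2}\big]_{0\le i,j\le p-1}$ be the integer matrix, so that $D_p(c,d)=\det A$. By Lemma \ref{lem:smith} with $N=p$ and $r=2$, it suffices to show that $\rank_{\Fp}(A\bmod p)\le p-2$.

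To bound this rank, we factor $A\bmod p$ through the coefficient matrix $C$. Reduced modulo $p$, the entry $A_{i,j}$ is the value at $(i,j)\in\Fp^2$ of the function $(X^2+cXY+dY^2)^{p-2}$. By Lemma \ref{lem:coefficient-matrix}, this function coincides on $\Fp^2$ with $R(X,Y)=\sum_{r,s}C_{r,s}X^rY^s$. Hence
\[
A_{i,j}\equiv\sum_{0\le r,s\le p-1} i^r C_{r,s}\, j^s \pmod p,
\]
which gives the factorisation $A\bmod p = W C W^{T}$. Here $W=(i^r)_{0\le i,r\le p-1}$ is the Vandermonde matrix over $\Fp$, with the convention $0^0=1$, which matches the constant-term convention used for $R$.

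Consequently $\rank_{\Fp}(A\bmod p)\le\rank_{\Fp} C\le p-2$, where the last inequality is Lemma \ref{lem:coefficient-matrix}. In fact $W$ is invertible since the points $0,\dots,p-1$ are distinct in $\Fp$, so the two ranks are equal, but only the inequality is needed. Lemma \ref{lem:smith} then gives $p^2\mid D_p(c,d)$.

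The only step needing care is the convention $0^0=1$ in the evaluation: the entries with $i=0$ or $j=0$ must really equal the values of $R$ there. This holds because $R$ and the original form agree as functions on all of $\Fp^2$, including the points with a zero coordinate. The substantive work has already been done in Lemmas \ref{lem:critical} and \ref{lem:coefficient-matrix}, so the remaining argument is routine.
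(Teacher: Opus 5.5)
Your proposal is correct and is essentially the paper's own proof: you reduce to the bound $\rank_{\Fp}(A\bmod p)\le p-2$ via Lemma \ref{lem:smith} with $N=p$, $r=2$, factor the reduced matrix as $WCW^{\mathrm t}$ with the Vandermonde matrix $W$, and invoke Lemma \ref{lem:coefficient-matrix}. Your side remarks about the $0^0=1$ convention and the invertibility of $W$ are accurate but not needed for the argument.
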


\begin{proof}
Let $a_i$ denote the residue class of $i$ in $\Fp$. Let
\[
        M=\big[(a_i^2+ca_ia_j+da_j^2)^{p-2}\big]_{0\le i,j\le p-1}
\]
be the reduction modulo $p$ of the integer matrix defining $D_p(c,d)$. Here $c$ and $d$ are also viewed modulo $p$. By Lemma \ref{lem:smith}, it is enough to prove that
\[
        \rank_{\Fp}M\le p-2.
\]

Let $R(X,Y)$ and $C=(C_{r,s})$ be as in Lemma \ref{lem:coefficient-matrix}. Since $R$ represents the same function on $\Fp^2$ as $(X^2+cXY+dY^2)^{p-2}$, we have
\[
        M=\big[R(a_i,a_j)\big]_{0\le i,j\le p-1}.
\]
Set
\[
        V=(a_i^r)_{0\le i,r\le p-1}.
\]
Then
\[
        M=VCV^{\mathrm t}.
\]
Hence
\[
        \rank_{\Fp}M\leq\rank_{\Fp}C\le p-2
\]
by Lemma \ref{lem:coefficient-matrix}. Lemma \ref{lem:smith}, with $N=p$ and $r=2$, now implies $p^2\mid D_p(c,d)$.
\end{proof}

\begin{proof}[Proof of Theorem \ref{thm:main}]
If $n$ is composite, the assertion follows from Proposition \ref{prop:composite}. If $n$ is prime, then the assumption in the theorem is precisely $\leg{d}{n}=-1$ as a Legendre-symbol condition, and Proposition \ref{prop:prime} applies.
\end{proof}
\section*{Declaration of Generative AI and AI-Assisted Technologies in the Writing Process}
During the preparation of this work, the authors used DeepSeek to build a specialized agent for solving mathematical problems, which was employed to generate an initial proof of the main theorem. After using this tool, the authors reviewed and edited the content as needed and take full responsibility for the content of the published article.

\end{document}